\documentclass[bjps,preprint]{imsart}
\pdfoutput=1
\RequirePackage[OT1]{fontenc}
\usepackage{amsthm,amsmath,natbib}

\usepackage{amsmath,amsfonts,amssymb,amsthm,color}
\usepackage{enumerate}
\usepackage{dsfont}



\startlocaldefs
\numberwithin{equation}{section}
\theoremstyle{plain}

\theoremstyle{plain}
\newtheorem{theorem}{Theorem}[section]

\theoremstyle{remark}
\newtheorem{rem}[theorem]{Remark}

\newtheorem{ex}[theorem]{Example}

\theoremstyle{definition}
\newtheorem{defi}[theorem]{Definition}

\theoremstyle{plain}

\newtheorem{lem}[theorem]{Lemma}
\newtheorem{prop}[theorem]{Proposition}

\numberwithin{equation}{section}

\newcommand\norm[1]{\left|\!\left| #1\right|\!\right|}

\newcommand\BIP{\operatorname{BIP}}

\newcommand\Vertt{|\!|}

\newcommand{\M}{\mathcal{M}}

\newcommand{\R}{\mathds{R}}

\newcommand{\Exp}{\mathds{E}}

\newcommand{\Prob}{\mathds{P}}

\newcommand{\Lin}{\mathcal{L}}
\newcommand{\abs}[1]{\left\vert#1\right\vert}
\newcommand{\set}[1]{\left\{#1\right\}}

\newcommand{\bE}{\mathbf{E}}
\newcommand{\bH}{\mathbf{H}}

\newcommand{\B}{\mathcal{B}}
\newcommand{\A}{\mathcal{A}}

\newcommand{\Fil}{\mathcal{F}}

\endlocaldefs

\begin{document}

\begin{frontmatter}

\title{A note on space-time H\"{o}lder regularity of mild solutions to stochastic Cauchy problems in $L^p$-spaces}

\runtitle{H\"{o}lder regularity of stochastic Cauchy problems}

\begin{aug}

\author{\fnms{Rafael} \snm{Serrano}\thanksref{a}\corref{}\ead[label=e1]{rafael.serrano@urosario.edu.co}}

\affiliation[a]{Universidad del Rosario\\Bogot\'{a}, Colombia}

\address[a]{Universidad del Rosario\\Calle 12c No. 4-69\\Bogot\'{a}, Colombia\\\printead{e1}}

\runauthor{Rafael Serrano}

\end{aug}

\begin{abstract}
This paper revisits the H\"{o}lder regularity of mild solutions of parabolic stochastic Cauchy problems in Lebesgue spaces $L^p(\mathcal{O}),$ with $p\geq 2$ and $\mathcal{O}\subset\R^d$ a bounded domain. We find conditions on $p, \beta$ and $\gamma$ under which the mild solution has almost surely trajectories in $\mathcal{C}^\beta([0,T];\mathcal{C}^\gamma(\bar{\mathcal{O}})).$ These conditions do not depend on the Cameron-Martin Hilbert space associated with the driving cylindrical noise. The main tool of this study is a regularity result for stochastic convolutions in M-type 2 Banach spaces by \cite{brz1}.
\end{abstract}

\begin{keyword}[class=MSC]
\kwd[Primary ]{60H15}
\kwd[; secondary ]{47D06; 35R15}
\end{keyword}

\begin{keyword}
\kwd{Stochastic Cauchy problem}
\kwd{additive cylindrical noise}
\kwd{H\"{o}lder regularity}
\kwd{stochastic convolution}
\kwd{Lebesgue spaces}
\end{keyword}

\end{frontmatter}

\section{Introduction}
\label{intro}

Let $d\geq 1$ and let $\mathcal{O}\subset\R^d$ be a bounded domain. Let $\bH$ be a separable Hilbert space. In this short note we revisit the spatial and temporal H\"{o}lder regularity of mild solutions to stochastic Cauchy problems in $L^p(\mathcal{O})$ of the form
\begin{equation}\label{scp-intro}
\begin{split}
du(t)+A_pu(t)\,dt&=G(t)\,dW(t), \ \ t\in[0,T]\\
u(0)&=0
\end{split}
\end{equation}
where $A_p$ is the realization in $L^p(\mathcal{O})$ of a second-order differential operator with smooth coefficients, $G(\cdot)$ is an $\Lin(\bH,L^p(\mathcal{O}))$-valued process and $W(\cdot)$ is an $\bH$-cylindrical Wiener process.

Space-time regularity of  linear (affine) stochastically forced evolution equations driven by cylindrical noise has been studied by several authors using the mild solution approach in Hilbert (see, e.g. Section 5.5. of \cite{dpz1}, Section 3 of  \cite{cerrai0}) and Banach spaces (see, e.g. \cite{brz1}, Section 3.2 of \cite{brzgat}, and \cite{dvnw}).

In this paper, we find conditions on $p, \beta$ and $\gamma$ under which the mild solution to (\ref{scp-intro}) exists and has almost surely trajectories in $\mathcal{C}^\beta([0,T];\mathcal{C}^\gamma(\bar{\mathcal{O}})),$ see Proposition \ref{prop2} below. It is worth noting that these conditions do not depend on the Hilbert space $\bH,$ unlike nearly all existing results in the literature.

Following completion of the first draft version of this note, the author became aware of a space-time regularity result in a recent article by  \cite{vnvw2012} (see Theorem 1.2-(1) in that article) which seems comparable to our main result. However, their approach is much more involved as it is largely based on McIntosh's $H^\infty$-functional calculus  and R-boundedness techniques. The approach in this short note is simpler as it relies only on regularity results for stochastic convolutions in M-type 2 Banach spaces by \cite{brz1}.

We argue that, using the factorization method introduced by \cite{dapratoetal} and fixed-point arguments as in \cite{brz1}, this result can be easily generalized to mild solutions of semi-linear stochastic PDEs with multiplicative cylindrical noise, linear growth coefficients and zero Dirichlet-boundary conditions, as well as Neumann-type boundary conditions.

Let us briefly describe the contents of this paper. In section 2 we outline the construction of the stochastic integral and stochastic convolutions in M-type 2 Banach spaces with respect to a cylindrical Wiener process. For the details and proofs we refer to \cite{brz0,brz1,brz2} and the references therein.

In section 3 we state and prove our main result on H\"{o}lder space-time regularity for mild solutions of equation (\ref{scp-intro}). We apply this result to a linear stochastic PDE with a noise term that is ``white" in time but ``colored" in the space variable. Such noise terms are particularly relevant in $d$ dimensions with $d>1.$ We also illustrate how the main result can be generalized to incorporate stochastic PDEs with linear operators given as the fractional power of second-order partial differential operators.







\section{Stochastic convolutions in M-type 2 Banach spaces}
Let $(\Omega ,\Fil,\Prob)$ be a probability space endowed with a filtration $\mathds{F}=\{\Fil_t\}_{t\ge 0}$ and let $\left(\bH,[\cdot,\cdot]_\bH\right)$ denote a separable Hilbert space.
\begin{defi}
A family $W(\cdot)=\{W(t)\}_{t\geq 0}$ of bounded linear operators from $\bH$ into $L^2(\Omega;\R)$ is called an $\bH$-\emph{cylindrical Wiener process} (with respect to the filtration $\mathds{F})$ iff the following hold
\begin{enumerate}[(i)]
\item $\Exp\, W(t)y_1 W(t)y_2 = t[y_1 ,y_2]_\bH$ for all $t\ge 0$ and $y_1 ,y_2 \in\bH$.

\item For each $y\in\bH$, the process $\{W(t)y\}_{t\geq 0}$ is a standard one-dimensional Wiener process with respect to $\mathds{F}.$
\end{enumerate}
\end{defi}
For $q\geq 1,$ $T\in(0,\infty)$ and a Banach space $(V,\abs{\cdot}_V)$,  let $\M^q(0,T;V)$ denote the space of (classes of equivalences of) $\mathds{F}$-progressively measurable processes $\Phi:[0,T]\times\Omega\to V$ such that
\[
\norm{\Phi}^q_{\M^q(0,T;V)}:=\Exp\int_0^T\abs{\Phi(t)}^q_V\,dt<\infty.
\]
This is a Banach space when endowed with the norm $\norm{\cdot}_{\M^q(0,T;V)}.$

\begin{defi}
A process $\Phi(\cdot)$ with values in $\Lin(\bH,\bE)$ is said to be \emph{elementary} (with respect to the filtration $\mathds{F}$) if there exists a partition $0=t_0<t_1\cdots<t_N=T$ of $[0,T]$ such that
\[
\Phi(t)=\sum_{n=0}^{N-1}\sum_{k=1}^K \mathbf{1}_{[t_n,t_{n+1})}(t)[e_k,\cdot]_\bH\xi_{kn}, \ \ \ t\in [0,T].
\]
where $(e_k)_{k\geq 1}$ is an orthonormal basis of $\bH$ and $\xi_{kn}$ is an $\Fil_{t_n}-$measurable $\bE-$valued random variable , for $n=0,1,\dots,N-1, \ k=1,\ldots,K.$ For such processes we define the \emph{stochastic integral} as
\[
I_T(\Phi):=\int_0^T\Phi(t)\,dW(t):=\sum_{n=0}^{N-1}\sum_{k=1}^K\left(W(t_{n+1})e_k-W(t_{n})e_k\right)\xi_{kn}.
\]
\end{defi}
\begin{defi}
Let $(\gamma_k)_k$ be a sequence of real-valued standard Gaussian random variables. A bounded linear operator $R:\bH\to\bE$ is said to be $\gamma-$\emph{radonifying} iff there exists an orthonormal basis $(e_k)_{k\geq 1}$ of $\bH$ such that the sum $\sum_{k\geq 1}\gamma_k Re_k$ converges in $L^2(\Omega;\bE).$
\end{defi}
We denote by $\gamma(\bH,\bE)$ the class of $\gamma-$radonifying operators from $\bH$ into $\bE$,  which is a Banach space equipped with the norm
\[
\norm{R}^2_{\gamma(\bH,\bE)}:=\Exp\Biggl[\biggl|\sum_{k\geq 1}\gamma_k Re_k\biggr|^2_\bE \Biggr] , \ \ \ \ R\in \gamma(\bH,\bE).
\]
The above definition is independent of the choice of the orthonormal basis $(e_k)_{k\geq 1}$ of $\bH.$ Moreover, $\gamma(\bH,\bE)$ is continuously embedded into $\Lin(\bH,\bE)$ and is an operator ideal in the sense that if $\bH'$ and $\bE'$ are Hilbert and Banach spaces, respectively, such that $S_1\in\Lin(\bH',\bH)$ and $S_2\in\Lin(\bE,\bE')$ then $R\in \gamma(\bH,\bE)$ implies $S_2RS_1\in\gamma(\bH',\bE')$ with
\[
\norm{S_2RS_1}_{\gamma(\bH',\bE')}\le \norm{S_2}_{\Lin(\bE,\bE')}\norm{R}_{\gamma(\bH,\bE)}\norm{S_1}_{\Lin(\bH',\bH)}
\]

It can be proved that $R\in \gamma(\bH,\bE)$ iff $RR^*$ is the covariance operator of a centered Gaussian measure on $\B(\bE)$,  and if $\bE$ is a Hilbert space, then $\gamma(\bH,\bE)$ coincides with the space of Hilbert-Schmidt operators from $\bH$ into $\bE$ (see e.g. \cite{vn0} and the references therein). The following is also a very useful characterization of $\gamma-$radonifying operators in the case that $\bE$ is a $L^p-$space,
\begin{lem}[\cite{vnvw}, Lemma 2.1]\label{gammalp}
Let $(S,\mathfrak{A},\rho)$ be a $\sigma-$finite measure space and let $p\geq 1.$ Then, for an operator $R\in\Lin(\bH,L^p(S))$ the following assertions are equivalent
\begin{enumerate}
  \item $R\in\gamma(\bH,L^p(S))$.
  \item There exists a function $g\in L^p(S)$ such that for all $y\in\bH$ we have
  \[
  \abs{(Ry)(\xi)}\le \abs{y}_\bH\cdot g(\xi), \ \ \ \rho-\mbox{a.e.} \ \xi\in S.
  \]
\end{enumerate}
If either of these two assertions holds true, there exists a constant $c>0$ such that $\norm{R}_{\gamma(\bH,L^p(S))}\le c\abs{g}_{L^p(S)}.$
\end{lem}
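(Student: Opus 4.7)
The starting point is the identity, valid for any orthonormal basis $(e_k)_{k\geq 1}$ of $\bH$ and any $R\in\Lin(\bH,L^p(S))$,
\begin{equation*}
\Exp\Bigl|\sum_{k\geq 1}\gamma_k Re_k\Bigr|^p_{L^p(S)}=c_p\int_S\Bigl(\sum_{k\geq 1}\abs{(Re_k)(\xi)}^2\Bigr)^{p/2}d\rho(\xi),
\end{equation*}
which I would derive from Fubini's theorem and the fact that for each $\xi\in S$ the series $\sum_k\gamma_k (Re_k)(\xi)$ is a centered real Gaussian with variance $\sum_k\abs{(Re_k)(\xi)}^2$. Combined with the Kahane-Khintchine inequality, which compares the $L^2(\Omega;L^p(S))$- and $L^p(\Omega;L^p(S))$-norms of Gaussian sums up to constants depending only on $p$, this produces a two-sided equivalence between $\norm{R}_{\gamma(\bH,L^p(S))}$ and the $L^p(S)$-norm of the square function $\bigl(\sum_{k\geq 1}\abs{(Re_k)(\cdot)}^2\bigr)^{1/2}$.

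For the implication $(2)\Rightarrow(1)$, the pointwise estimate $\abs{(Ry)(\xi)}\le\abs{y}_\bH g(\xi)$ says that, for $\rho$-almost every $\xi$, the linear functional $y\mapsto (Ry)(\xi)$ is bounded on $\bH$. By the Riesz representation theorem there exists $h_\xi\in\bH$ with $\abs{h_\xi}_\bH\le g(\xi)$ and $(Ry)(\xi)=[y,h_\xi]_\bH$. Hence by Parseval
\[
\sum_{k\geq 1}\abs{(Re_k)(\xi)}^2=\abs{h_\xi}_\bH^2\le g(\xi)^2,\quad\rho\text{-a.e.}
\]
Plugging into the equivalence above yields both $R\in\gamma(\bH,L^p(S))$ and the quantitative bound $\norm{R}_{\gamma(\bH,L^p(S))}\le c\abs{g}_{L^p(S)}$.

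For the converse $(1)\Rightarrow(2)$, assuming $R\in\gamma(\bH,L^p(S))$, the equivalence forces $g(\xi):=\bigl(\sum_{k\geq 1}\abs{(Re_k)(\xi)}^2\bigr)^{1/2}$ to belong to $L^p(S)$. After fixing pointwise representatives of each $Re_k$, Cauchy-Schwarz gives, for any $y=\sum_k y_k e_k\in\bH$, the pointwise bound $\bigl|\sum_k y_k(Re_k)(\xi)\bigr|\le\abs{y}_\bH g(\xi)$ on the set $\{g<\infty\}$. One then identifies this pointwise sum with the class of $Ry$ in $L^p(S)$, using that the partial sums converge to $Ry$ in $L^p(S)$ and hence, along a subsequence, $\rho$-a.e.

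The main obstacle is the quantifier swap needed in $(1)\Rightarrow(2)$: the estimate must hold outside a \emph{single} $\rho$-null set independent of $y$, while a priori each $y$ carries its own null set. The standard resolution is to verify the bound first on a countable dense subset of $\bH$ (outside a common null set) and then extend to all of $\bH$ using the uniform continuity in $y$ afforded by the domination $\abs{y}_\bH g(\xi)$, after replacing $R$ by the pointwise version thus constructed in its $L^p(S)$-equivalence class.
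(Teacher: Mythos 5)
This lemma is imported verbatim from van Neerven--Veraar--Weis (\cite{vnvw}, Lemma 2.1) and the paper gives no proof of it, so there is no in-paper argument to compare against; I can only assess your proposal on its own terms. Your route is the standard one --- the Gaussian square-function identity plus Kahane--Khintchine reduces $\norm{R}_{\gamma(\bH,L^p(S))}$ to the $L^p(S)$-norm of $\bigl(\sum_{k}\abs{(Re_k)(\cdot)}^2\bigr)^{1/2}$, after which both implications are Parseval and Cauchy--Schwarz --- and it is essentially correct, including the quantitative bound $\norm{R}_{\gamma(\bH,L^p(S))}\le c\abs{g}_{L^p(S)}$, which comes out of the $(2)\Rightarrow(1)$ direction. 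One point of care is slightly misplaced, though: the quantifier swap you flag at the end is needed in $(2)\Rightarrow(1)$, not only in $(1)\Rightarrow(2)$. The hypothesis in (2) reads ``for all $y$, for $\rho$-a.e.\ $\xi$,'' with the null set allowed to depend on $y$, so your assertion that ``for $\rho$-almost every $\xi$ the functional $y\mapsto (Ry)(\xi)$ is bounded'' requires first passing to a countable dense subset of $\bH$ (or of the unit ball of $\spanned(e_1,\dots,e_N)$ for each $N$) to extract a single null set, and fixing measurable representatives so that $y\mapsto (Ry)(\xi)$ is actually linear pointwise; only then does Riesz representation apply. Conversely, in $(1)\Rightarrow(2)$ the statement's quantifier order already tolerates a $y$-dependent null set, so the subsequence argument you give there suffices without the swap. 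With that bookkeeping relocated, the proof is complete and matches the argument in \cite{vnvw}.
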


\begin{defi}
A Banach space $\bE$ is said to be of \emph{martingale type} $2$ (and we write $\bE$ is \emph{M-type} $2$) iff there exists a constant $C_2>0$ such that
\begin{equation}\label{mtype2}
\sup_{n} \mathbb{E} | M_{n} |_\bE ^{2} \le C_2 \sum_{n} \mathbb{E}  | M_{n}-M_{n-1} |_\bE ^{2}
\end{equation}
for any $\bE-$valued discrete martingale $\{M_{n}\}_{n\in \mathds{N}}$ with $M_{-1}=0.$
\end{defi}

\begin{ex}
Hilbert spaces and Lebesgue spaces $L^{p}(\mathcal{O}),$ with $p\geq 2$ and $\mathcal{O}\subset\R^d$ a bounded domain, are examples of M-type $2$ Banach spaces.
\end{ex}

If $\bE$ is a M-type 2 Banach space, it is easy to show (see e.g. \cite{dett}) that the stochastic integral $I_T(\Phi)$ for elementary processes $\Phi(\cdot)$ satisfies
\begin{equation}\label{burk0}
\Exp\abs{I_T(\Phi)}_\bE^2\le C_2 \Exp\int_0^T\norm{\Phi(s)}_{\gamma(\bH,\bE)}^2\,ds
\end{equation}
where $C_2$ is the same constant in (\ref{mtype2}). Since the set of elementary processes is dense in $\M^2(0,T;\gamma(\bH,\bE)),$ see e.g. Lemma 18 in Chapter 2 of \cite{neidhardt}, by (\ref{burk0}) the linear mapping $I_T$ extends to a bounded linear operator from $\M^2(0,T;\gamma(\bH,\bE))$ into $L^2(\Omega;\bE).$ We denote this operator also by $I_T.$

Finally, for each $t\in [0,T]$ and $\Phi\in \M^2(0,T;\gamma(\bH,\bE))$,  we define
\[
\int_0^t\Phi(s)\,dW(s):=I_T(\mathbf{1}_{[0,t)}\Phi).
\]

\begin{defi}
Let $A$ be a linear operator on a Banach space $\bE.$ We say that $A$ is \emph{positive} if it is closed, densely defined, $(-\infty,0]\subset \rho(A)$ and there exists $C\geq 1$ such that
\[
\Vertt (\lambda I+A)^{-1}\Vertt_{\Lin(\bE)} \le  \frac{C}{1+\lambda}, \ \ \ \text{ for all }\lambda\geq 0.
\]
\end{defi}

It is well known that if $A$ is a positive operator on $\bE$,  then $A$ admits (not necessarily bounded) fractional powers $A^z$ of any order $z\in\mathds{C},$ see e.g. \cite[Chapter III, Section 4.6]{amann}. Recall that, in particular, for $\abs{\Re z}\le 1$ the fractional power $A^z$ is defined as the closure of the linear mapping
\begin{equation}\label{Az}
D(A)\ni x\mapsto \frac{\sin \pi z}{\pi z}\int_0^{+\infty} t^z(tI+A)^{-2}Ax\,dt\in\bE,
\end{equation}
Moreover, if $\Re z\in (0,1)$,  then $A^{-z}\in\Lin(\bE)$ and we have
\[
A^{-z}x=\frac{\sin \pi z}{\pi}\int_0^{+\infty}t^{-z}(tI+A)^{-1}x\,dt.
\]
see e.g. \cite[p. 153]{amann}.

\begin{defi}
The class $\BIP(\theta,\bE)$ of operators with \emph{bounded imaginary powers} on $\bE$ with parameter $\theta\in[0,\pi)$ is defined as the class of positive operators $A$ on $\bE$ with the property that $A^{is}\in \Lin(\bE)$ for all $s\in\R$ and there exists a constant $K>0$ such that
\begin{equation}
\Vertt A^{is} \Vertt_{\Lin(\bE)} \le K e^{\theta |s|}, \; s \in \R.
\label{2.1}
\end{equation}
\end{defi}
We denote $\BIP^-(\theta,\bE):=\cup_{\sigma\in(0,\theta)}\BIP(\sigma,\bE).$ The following is the main assumption for the rest of this note
\begin{equation}\label{bip}
A\in\BIP^-(\pi/2,\bE).
\end{equation}
Under this assumption, the linear operator $-A$ generates an (uniformly bounded) analytic $C_0-$semigroup $(S_t)_{t\geq 0}$ on $\bE,$ see e.g. Theorem 2 in \cite{pruesohr1}.

\begin{ex}\label{ex0}
Let $\mathcal{O}$ be a bounded domain in $\R^d$ with smooth boundary and let $\mathcal{A}$ denote the second-order elliptic differential operator
\[
(\mathcal{A}u)(\xi):=-\sum_{i,j=1}^d a_{ij}(\xi)\frac{\partial^2 u}{\partial \xi_i\partial \xi_j}+\sum_{i=1}^d b_{i}(\xi)\frac{\partial u}{\partial \xi_i} + c(\xi)u(\xi), \ \ u\in\mathcal{C}^2(\mathcal{O}), \ \ \xi\in\mathcal{O},
\]
with coefficients $a,b$ and $c$ satisfying the following conditions
\begin{itemize}
  \item[(i)] $a(\xi)=(a_{ij}(\xi))_{1\le i,j\le d}$ is a real-valued symmetric matrix for all $\xi\in\mathcal{O},$ and there  exists $a_0>0$ such that
\[
a_0\le\sum_{i,j=1}^d a_{ij}(\xi)\lambda_i\lambda_j\le \frac{1}{a_0}, \ \mbox{ for all }\xi\in\mathcal{O}, \ \lambda\in\R^d, \ \abs{\lambda}=1.
\]

\item[(ii)] $a_{ij}\in\mathcal{C}^{\alpha}(\bar{\mathcal{O}})$ for some $\alpha\in(0,1).$

  \item[(iii)] $b_i\in L^{k_1}(\mathcal{O})$ and $c\in L^{k_2}(\mathcal{O}),$ for some $k_1>d$ and $k_2>d/2.$
\end{itemize}
For $p>1$ and $\nu\geq 0,$ let $A_{p,\nu}$ denote the realization of $\mathcal{A}+\nu I$ in $L^p(\mathcal{O})$, that is,
\begin{equation}\label{Aqdef}
\begin{split}
A_{p,\nu}u&:=\mathcal{A}u+\nu u\\
D(A_{p,\nu})&:=W^{2,p}(\mathcal{O})\cap W_0^{1,p}(\mathcal{O}).
\end{split}
\end{equation}
By Theorems A and D of \cite{pruesohr2}, if $p\le \min\set{k_1,k_2}$  there exists $\bar\nu\geq 0$ sufficiently large so that $A_{p,\bar\nu}\in\BIP^-(\pi/2,L^p(\mathcal{O})).$

Other examples of operators satisfying main assumption (\ref{bip}) include realizations in $L^p(\mathcal{O})$ of higher order elliptic partial differential operators (see \cite{seeley}), the Stokes operator (see \cite{gigasohr}) and second-order elliptic partial differential operators with Neumann-type boundary conditions (see \cite{sohrthaeter}).
\end{ex}

\begin{theorem}[\cite{brz1}, Theorem 3.2]\label{Th:2.1}
Let $T\in (0,\infty)$ and $A\in\BIP^-(\pi/2,\bE)$ be fixed. Let $\bE$ be an M-type $2$ Banach space and $G(\cdot)$ an $\Lin(\bH,\bE)-$valued stochastic process satisfying
\begin{equation}\label{sigma-eta}
A^{-\sigma} G(\cdot) \in \mathcal{M}^q\left(0,T;\gamma(\bH,\bE)\right).
\end{equation}
for some $q\geq 2$ and $\sigma\in\left[0,\frac{1}{2}\right).$ Then, for each $t\in[0,T],$ we have $S_{t-r}G(r)\in \gamma(\bH,\bE)$ and the map
\[
[0,t]\ni r\mapsto S_{t-r}G(r)\in\gamma(\bH,\bE)
\]
belongs to $\M^q(0,t;\gamma(\bH,\bE)).$ Moreover, the $\bE$-valued process
\begin{equation}\label{2.19}
u(t):=\int^t_0 S_{t-r}G (r)\,dW(r), \ \  t\in [0,T],
\end{equation}
belongs to $\M^q(0,T;\bE)$ and satisfies the estimate
\[
\norm{u(\cdot)}_{\M^q(0,T;\bE)}\le C\norm{A^{-\sigma}G(\cdot)}_{\mathcal{M}^q\left(0,T;\gamma(\bH,\bE)\right)}
\]
for some constant $C$ depending on $\bE,A,T,\sigma$ and $q.$
\end{theorem}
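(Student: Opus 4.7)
The plan is to reduce the statement to a combination of the analytic semigroup smoothing estimate for $A$, the operator ideal property of $\gamma(\bH,\bE)$ recalled in Section 2, and an $L^q$-version of the M-type $2$ Burkholder inequality \rf{burk0}. Since $A\in\BIP^-(\pi/2,\bE)$, the operator $-A$ generates a bounded analytic $C_0$-semigroup $(S_t)_{t\geq 0}$ on $\bE$, so that for every $\sigma\geq 0$ there exists $C_\sigma>0$ with
\[
\norm{A^\sigma S_t}_{\Lin(\bE)}\le C_\sigma\,t^{-\sigma},\qquad t>0.
\]
Because $S_{t-r}$ maps $\bE$ into $D(A^\sigma)$ for every $r<t$, I would factor $S_{t-r}G(r)=(A^\sigma S_{t-r})\bigl(A^{-\sigma}G(r)\bigr)$ and invoke the operator ideal property to obtain the pointwise estimate
\[
\norm{S_{t-r}G(r)}_{\gamma(\bH,\bE)}\le C_\sigma\,(t-r)^{-\sigma}\norm{A^{-\sigma}G(r)}_{\gamma(\bH,\bE)}.
\]
Progressive measurability of the map $r\mapsto S_{t-r}G(r)$ as a $\gamma(\bH,\bE)$-valued process follows from the strong continuity of $(S_t)_{t\geq 0}$, the progressive measurability of $A^{-\sigma}G(\cdot)$, and continuity of operator composition on $\Lin(\bE)\times\gamma(\bH,\bE)$. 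Since $\sigma<1/2$, the weight $(t-r)^{-\sigma}$ belongs to $L^2(0,t)$, so combining this with $A^{-\sigma}G(\cdot)\in\M^q(0,T;\gamma(\bH,\bE))$ and H\"older's inequality yields $r\mapsto S_{t-r}G(r)\in\M^q(0,t;\gamma(\bH,\bE))$, and hence \rf{2.19} is well defined in $L^q(\Omega;\bE)$.

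To obtain the $\M^q$-norm estimate I would first extend \rf{burk0} to the $L^q$-version
\[
\Exp\abs{I_t(\Phi)}_\bE^q\le K_q\,\Exp\Bigl(\int_0^t\norm{\Phi(s)}_{\gamma(\bH,\bE)}^2\,ds\Bigr)^{q/2},\qquad q\geq 2,
\]
valid in any M-type $2$ Banach space; this follows from the Burkholder-Davis-Gundy inequality applied to the discrete-time martingales underlying the definition of $I_t$ on elementary processes, together with a density argument. Applying this to $\Phi(r)=\mathbf 1_{[0,t)}(r)\,S_{t-r}G(r)$ and inserting the pointwise bound yields
\[
\Exp\abs{u(t)}_\bE^q\le C\,\Exp\Bigl(\int_0^t (t-r)^{-2\sigma}\norm{A^{-\sigma}G(r)}_{\gamma(\bH,\bE)}^2\,dr\Bigr)^{q/2}.
\]
Integrating in $t\in[0,T]$, exchanging the expectation and the $dt$-integral by Tonelli, and applying Young's convolution inequality in $L^{q/2}(0,T)$ with kernel $g(s)=s^{-2\sigma}\mathbf 1_{(0,T)}(s)\in L^1(0,T)$ (integrable precisely because $\sigma<1/2$) and $f(r)=\norm{A^{-\sigma}G(r)}_{\gamma(\bH,\bE)}^2$ then produces
\[
\int_0^T\Exp\abs{u(t)}_\bE^q\,dt\le C'\,\Exp\!\int_0^T\norm{A^{-\sigma}G(r)}_{\gamma(\bH,\bE)}^q\,dr,
\]
which is the claimed bound.

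The main obstacle I anticipate is establishing the $L^q$-Burkholder inequality in the M-type $2$ setting for $q>2$, which is not immediate from \rf{burk0}; beyond that, the subtle point is to use Young's convolution inequality rather than a direct H\"older estimate on the inner integral, since a straightforward H\"older bound would force the restriction $\sigma<\tfrac12-\tfrac1q$ and thereby fail to cover the full range $\sigma\in[0,\tfrac12)$ asserted in the theorem.
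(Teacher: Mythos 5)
The paper does not prove this statement: it is quoted verbatim, with attribution, from \cite{brz1} (Theorem 3.2), so there is no internal proof to compare yours against. That said, your sketch follows what is essentially the argument of the cited source: the factorization $S_{t-r}G(r)=\bigl(A^{\sigma}S_{t-r}\bigr)\bigl(A^{-\sigma}G(r)\bigr)$ combined with the analytic-semigroup smoothing bound and the ideal property of $\gamma(\bH,\bE)$, then the $q$-th moment Burkholder inequality for martingale type $2$ spaces, and finally Young's convolution inequality in $L^{q/2}(0,T)$ with the kernel $s^{-2\sigma}\in L^{1}(0,T)$; your observation that Young's inequality, not H\"older, is what recovers the full range $\sigma\in[0,\tfrac12)$ is exactly the right point. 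Two caveats. First, the $L^{q}$ Burkholder inequality you defer is a genuine ingredient: it does not follow from \rf{burk0} by density or interpolation alone and requires the Banach-space Burkholder--Davis--Gundy (or a Lenglart/good-$\lambda$) argument in martingale type $2$ spaces; it is, however, available in the literature you would be reproducing (\cite{brz1,dett}). Second, your justification of the intermediate claim that $r\mapsto S_{t-r}G(r)$ belongs to $\M^{q}(0,t;\gamma(\bH,\bE))$ for \emph{every} fixed $t$ is too quick: the pointwise bound leads to $\Exp\int_{0}^{t}(t-r)^{-q\sigma}\norm{A^{-\sigma}G(r)}_{\gamma(\bH,\bE)}^{q}\,dr$, and a direct H\"older estimate against the $\M^{q}$ norm of $A^{-\sigma}G(\cdot)$ only controls this when $q\sigma<1$ (and controls the $\M^{2}$ norm needed to define the stochastic integral only when $\sigma<\tfrac12-\tfrac1q$); for the remaining values of $\sigma$ one obtains finiteness for almost every $t$ by Tonelli, which is what actually feeds into the $\M^{q}(0,T;\bE)$ estimate, so this step needs to be stated and argued more carefully than ``H\"older's inequality yields.''
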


\begin{defi}
For $u_0\in\bE$ given, a process $u(\cdot)\in\M^q(0,T;\bE)$ is called a \emph{mild solution} to the the abstract stochastic  Cauchy problem
\begin{equation}\label{scp0}
\begin{split}
  du(t)+Au(t)\,dt&=G(t)\,dW(t), \ \ t\in [0,T]\\
  u(0)&=u_0
\end{split}
\end{equation}
iff for all $t\in [0,T]$ we have almost surely
\[
u(t)=S_tu_0+\int^t_0 S_{t-r}G (r)\,dW(r).
\]
\end{defi}

\begin{theorem}[\cite{brz1}, Corollary 3.5]\label{Co:2}
Under the  assumptions of Theorem \ref{Th:2.1}, let $\delta$ and $\beta$ satisfy
\begin{equation}
\beta+\delta + \sigma +\frac{1}{q}< \frac{1}{2}.
\label{cond:2}
\end{equation}
Then, there exists a modification of $u(\cdot),$ which we also denote with $u(\cdot),$ that has trajectories almost surely in $\mathcal{C}^\beta([0,T];D(A^\delta))$ and satisfies
\[
\Exp\norm{u(\cdot)}^q_{\mathcal{C}^\beta([0,T];D(A^\delta))}\le C'
\norm{ A^{-\sigma }G(\cdot)}^q_{\mathcal{M}^q\left(0,T;\gamma(\bH,\bE)\right)}
\]
for some constant $C'$ depending on $\bE,T,A,\beta,\delta,\sigma$ and $q.$
\end{theorem}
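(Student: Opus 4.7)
The plan is to apply the Da Prato-Kwapien-Zabczyk factorization method. Since condition \rf{cond:2} gives $\beta+\delta+1/q<1/2-\sigma$, we may fix some $\alpha\in(\beta+\delta+1/q,\,1/2-\sigma)$ and introduce the auxiliary $\bE$-valued process
\[
Y(r):=\int_0^r (r-s)^{-\alpha}\,S_{r-s}G(s)\,dW(s),\quad r\in[0,T].
\]

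The first substantial step is to show that $Y\in\M^q(0,T;\bE)$. Applying the Burkholder-type inequality \rf{burk0} for M-type 2 spaces pointwise in $r$, together with the analytic semigroup bound $\Vertt A^\sigma S_t\Vertt_{\Lin(\bE)}\le Ct^{-\sigma}$ (which holds under the standing assumption \rf{bip}) and the operator ideal property of $\gamma(\bH,\bE)$, one obtains
\[
\Exp\abs{Y(r)}_\bE^q\le C_q\,\Exp\left(\int_0^r (r-s)^{-2(\alpha+\sigma)}\norm{A^{-\sigma}G(s)}_{\gamma(\bH,\bE)}^2\,ds\right)^{q/2}.
\]
Integrating in $r$, using Minkowski's integral inequality (which requires $q\ge 2$) and then Young's convolution inequality, the right-hand side reduces to a constant multiple of $\norm{A^{-\sigma}G}_{\M^q(0,T;\gamma(\bH,\bE))}^q$; the reduction is legitimate precisely because the kernel $u\mapsto u^{-2(\alpha+\sigma)}$ lies in $L^1(0,T)$, which is the point where $\alpha+\sigma<1/2$ is used.

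Next, invoking the Beta identity $\int_r^t (t-p)^{\alpha-1}(p-r)^{-\alpha}\,dp=\pi/\sin(\pi\alpha)$, the semigroup law $S_{t-p}S_{p-r}=S_{t-r}$, and a stochastic Fubini theorem, I would derive the factorization
\[
u(t)=\frac{\sin(\pi\alpha)}{\pi}\int_0^t (t-p)^{\alpha-1}\,S_{t-p}Y(p)\,dp,\quad t\in[0,T],
\]
valid almost surely. Applying $A^\delta$ under the integral and using $\Vertt A^\delta S_t\Vertt_{\Lin(\bE)}\le Ct^{-\delta}$, the proof reduces to the purely deterministic claim that the Volterra-type operator
\[
f\mapsto \left(t\mapsto \int_0^t (t-p)^{\alpha-1} A^\delta S_{t-p}f(p)\,dp\right)
\]
maps $L^q(0,T;\bE)$ continuously into $\mathcal{C}^\beta([0,T];\bE)$ whenever $\alpha>\beta+\delta+1/q$. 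Combining this with the bound on $Y$ yields $A^\delta u\in\mathcal{C}^\beta([0,T];\bE)$ a.s.\ together with the stated $L^q(\Omega)$-estimate.

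I expect the main obstacle to be precisely this last, deterministic, step under the sharp condition $\alpha>\beta+\delta+1/q$. Writing the increment of the Volterra operator for $s<t$ as an integral on $[s,t]$ (handled by H\"{o}lder's inequality in time paired with the singular factor $(t-p)^{\alpha-\delta-1}$) plus an integral on $[0,s]$ (treated by writing $S_{t-p}-S_{s-p}=(S_{t-s}-I)S_{s-p}$ and using a fractional-power estimate $\Vertt S_{t-s}-I\Vertt_{\Lin(D(A^\vt),\bE)}\le C(t-s)^\vt$ for small $\vt>0$), the $1/q$ loss enters exactly through the H\"{o}lder step. It is in this balancing of $\alpha,\beta,\delta,q$ that the sharpness of \rf{cond:2} is felt.
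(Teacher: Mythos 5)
This statement is quoted verbatim from Brze\'{z}niak (1997), Corollary 3.5; the paper under review gives no proof of it, so there is nothing internal to compare your argument against. Your reconstruction via the Da~Prato--Kwapie\'{n}--Zabczyk factorization is correct and is essentially the standard route to this result: the choice $\alpha\in(\beta+\delta+1/q,\,1/2-\sigma)$ is exactly what makes both halves work, the kernel integrability $2(\alpha+\sigma)<1$ gives $Y\in\M^q(0,T;\bE)$, and the deterministic Volterra mapping $L^q(0,T;\bE)\to\mathcal{C}^\beta([0,T];D(A^\delta))$ under $\alpha>\beta+\delta+1/q$ is the classical lemma. The only imprecision worth flagging is your appeal to \rf{burk0} for $q$-th moments: as stated in the paper that inequality covers only $q=2$, and for $q>2$ you need its standard extension $\Exp\abs{I_T(\Phi)}^q_\bE\le C_q\,\Exp\bigl(\int_0^T\norm{\Phi(s)}^2_{\gamma(\bH,\bE)}\,ds\bigr)^{q/2}$, valid in M-type 2 spaces; with that substitution the sketch is sound.
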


\begin{rem}\label{nu0}
The above results are still valid if $A+\nu I\in\BIP^-(\pi/2,\bE)$ for some $\nu\geq 0,$ see e.g. \cite[p.192]{brzgat}.
\end{rem}

\section{Main result}
Let $\mathcal{A}$ be the second order differential operator from Example \ref{ex0}, and let $A_p:=A_{p,\bar\nu}$ denote the realization of $\A+\nu I$ on $L^p(\mathcal{O}),$ with $\bar\nu\geq 0$ chosen so that $A_{p,\bar\nu}\in\BIP^-(\frac{\pi}{2},L^p(\mathcal{O})).$ We consider the stochastic Cauchy problem in $L^p(\mathcal{O})$
\begin{equation}\label{scp}
\begin{split}
du(t)+A_pu(t)\,dt&=G(t)\,dW(t), \ t\in[0,T],\\
u(0)&=0.
\end{split}
\end{equation}

\begin{lem}\label{lem1}
Assume $m:=\min\set{k_1,k_2}>\max\set{2,d}$ and
\begin{equation}\label{ineqpm}
p\in\bigl(\max\set{2,d},m\bigr].
\end{equation}
Let $G(\cdot)$ be an $\Lin(\bH,L^p(\mathcal{O}))$-valued process such that
\begin{equation}\label{sigma-eta-2}
G(\cdot)\in\M^q\left(0,T;\Lin(\bH,L^p(\mathcal{O}))\right).
\end{equation}
Then, for any $\sigma\in\bigl(\frac{d}{2p},\frac{1}{2}\bigr),$ $A_p^{-\sigma}G(\cdot)$ is an $\gamma(\bH,L^p(\mathcal{O}))$-valued process and we have
\begin{equation}\label{Aqsigmaeta}
A_p^{-\sigma}G(\cdot)\in\M^q\bigl(0,T;\gamma(\bH,L^p(\mathcal{O}))\bigr).
\end{equation}
\end{lem}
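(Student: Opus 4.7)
The strategy is to exploit the Sobolev embedding of the fractional
domain $D(A_p^\sigma)$ into $L^\infty(\mathcal{O})$ in order to bring
ourselves into the setting of the $L^p$-characterisation of
$\gamma$-radonifying operators given by Lemma \ref{gammalp}. More
precisely, the plan is to show first that, under the hypotheses on
$p$ and $\sigma$, the operator $A_p^{-\sigma}$ extends to a bounded
linear map from $L^p(\mathcal{O})$ into $L^\infty(\mathcal{O})$, and
then to turn this smoothing property into the pointwise domination
required by Lemma \ref{gammalp}.

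The key analytical step is therefore the embedding
$D(A_p^\sigma)\embed L^\infty(\mathcal{O})$. Since
$A_p\in\BIP^-(\pi/2,L^p(\mathcal{O}))$, complex interpolation gives
$D(A_p^\sigma)=[L^p(\mathcal{O}),D(A_p)]_\sigma$, and because
$2\sigma<1$ the latter embeds continuously into the Bessel potential
space $H^{2\sigma,p}(\mathcal{O})$. The Sobolev embedding
$H^{2\sigma,p}(\mathcal{O})\embed L^\infty(\mathcal{O})$ is valid
precisely when $2\sigma>d/p$, which matches the hypothesis
$\sigma>d/(2p)$ and forces $p>d$. Granted this, for
$\Prob\otimes dt$-a.e. $(\omega,t)\in\Omega\times[0,T]$ and every
$y\in\bH$ one obtains the pathwise pointwise bound
\[
\abs{(A_p^{-\sigma}G(t)y)(\xi)}\le C\norm{G(t)}_{\Lin(\bH,L^p(\mathcal{O}))}\abs{y}_\bH, \ \ \mbox{a.e.}\ \xi\in\mathcal{O},
\]
with $C:=\norm{A_p^{-\sigma}}_{\Lin(L^p(\mathcal{O}),L^\infty(\mathcal{O}))}$. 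Lemma \ref{gammalp}, applied with the constant dominating function $g\equiv C\norm{G(t)}_{\Lin(\bH,L^p(\mathcal{O}))}$ (which lies in $L^p(\mathcal{O})$ because $\mathcal{O}$ is bounded), then delivers $A_p^{-\sigma}G(t)\in\gamma(\bH,L^p(\mathcal{O}))$ together with a pathwise estimate of the form $\norm{A_p^{-\sigma}G(t)}_{\gamma(\bH,L^p(\mathcal{O}))}\le c\,C\,\abs{\mathcal{O}}^{1/p}\norm{G(t)}_{\Lin(\bH,L^p(\mathcal{O}))}$.

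Raising the latter inequality to the power $q$, integrating in time
and taking expectation transforms \eqref{sigma-eta-2} directly into
\eqref{Aqsigmaeta}. Progressive measurability of
$A_p^{-\sigma}G(\cdot)$ viewed as a $\gamma(\bH,L^p(\mathcal{O}))$-valued
process is inherited from that of $G(\cdot)$, since the same
estimate also shows that the linear map $R\mapsto A_p^{-\sigma}R$ is
continuous from $\Lin(\bH,L^p(\mathcal{O}))$ into
$\gamma(\bH,L^p(\mathcal{O}))$. The only genuinely delicate step is
thus the smoothing estimate
$A_p^{-\sigma}:L^p(\mathcal{O})\to L^\infty(\mathcal{O})$, which is
where both the $\BIP^-$ hypothesis on $A_p$ and the lower bound
$\sigma>d/(2p)$ (and hence $p>d$) are actually used; the remainder of
the argument is essentially bookkeeping on top of Lemma \ref{gammalp}.
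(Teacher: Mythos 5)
Your proposal is correct and follows essentially the same route as the paper: the embedding $D(A_p^\sigma)=[L^p(\mathcal{O}),D(A_p)]_\sigma\subseteq H^{2\sigma,p}(\mathcal{O})\hookrightarrow L^\infty(\mathcal{O})$ (valid since $2\sigma>d/p$), followed by an application of Lemma \ref{gammalp} with a constant dominating function, which lies in $L^p(\mathcal{O})$ because $\mathcal{O}$ is bounded. The only cosmetic differences are that you phrase the smoothing step as boundedness of $A_p^{-\sigma}:L^p(\mathcal{O})\to L^\infty(\mathcal{O})$ where the paper estimates the graph norm $\abs{A_p^{-\sigma}G(t)y}_{D(A_p^\sigma)}$ directly, and that you make the (correct, and in the paper implicit) observation about progressive measurability explicit.
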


\begin{proof}
By Theorem 1.15.3 in \cite{triebel} we have
\[
D(A_p^\sigma)=[L^p(\mathcal{O}),D(A_p)]_\sigma\subseteq [L^p(\mathcal{O}),W^{2,p}(\mathcal{O})]_\sigma=H^{2\sigma,p}(\mathcal{O}).
\]
with continuous embeddings. Here $[\cdot,\cdot]_\sigma$ denotes complex interpolation and $H^{2\sigma,p}(\mathcal{O})$ denotes the Bessel-potential space of fractional order $2\sigma,$ see e.g. \cite{triebel}.

By the Sobolev embedding theorem, we have
$H^{2\sigma,p}(\mathcal{O})\subset \mathcal{C}(\bar{\mathcal{O}})$ with continuous embedding, and since $\mathcal{O}$ is bounded we also have $\mathcal{C}(\bar{\mathcal{O}})\subset L^{\infty}(\mathcal{O}).$ Let $c_{\sigma,p}>0$ denote the norm of the continuous embedding $D(A_p^\sigma)\subset L^\infty(\mathcal{O})$. Then, for any $y\in \bH$ we have
\begin{align*}
\abs{A_p^{-\sigma}G(t)y}_{L^\infty(\mathcal{O})}
&\le c_{\sigma,p}\abs{A_p^{-\sigma}G(t)y}_{D(A_p^\sigma)}\\
&=c_{\sigma,p}\left(\abs{A_p^{-\sigma}G(t)y}_{L^p(\mathcal{O})}+\abs{G(t)y}_{L^p(\mathcal{O})}\right)\\
&\le c_{\sigma,p}\left(1+|\!|A_p^{-\sigma }|\!|_{\Lin(L^p(\mathcal{O}))}\right)\abs{G(t)y}_{L^p(\mathcal{O})}\\
&\le c_{\sigma,p}\left(1+|\!|A_p^{-\sigma }|\!|_{\Lin(L^p(\mathcal{O}))}\right)\norm{G(t)}_{\Lin(\bH,L^p(\mathcal{O}))}\abs{y}_{\bH}.
\end{align*}
Hence, by Lemma \ref{gammalp}, there exists $c'>0$ such that
\[
\left|\!\left|A_p^{-\sigma}G(t)\right|\!\right|_{\gamma(\bH,L^p(\mathcal{O}))}\le c'\norm{G(t)}_{\Lin(\bH,L^p(\mathcal{O}))}
\]
and (\ref{Aqsigmaeta}) follows from (\ref{sigma-eta-2}).
\end{proof}
%


\begin{prop}\label{prop2}
Let $G(\cdot)$ be as in Lemma \ref{lem1}. Suppose further that $p,q,\beta$ and $\gamma$ satisfy
\begin{equation}\label{ineqbg}
\beta+\frac{\gamma}{2}+\frac{1}{q}+\frac{d}{p}<\frac{1}{2}.
\end{equation}
Then the mild solution to $(\ref{scp})$ exists and has almost surely trajectories in \linebreak $\mathcal{C}^\beta([0,T];\mathcal{C}^\gamma(\bar{\mathcal{O}})).$
\end{prop}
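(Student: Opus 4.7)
The plan is to combine Lemma \ref{lem1} with Theorems \ref{Th:2.1} and \ref{Co:2} and then to upgrade the spatial regularity via a Sobolev-type embedding of the fractional domain $D(A_p^\delta)$ into a Hölder space. First, I would fix parameters $\sigma$ and $\delta$ satisfying
\[
\sigma\in\bigl(\tfrac{d}{2p},\tfrac{1}{2}\bigr),\qquad \delta>\tfrac{\gamma}{2}+\tfrac{d}{2p},\qquad \beta+\delta+\sigma+\tfrac{1}{q}<\tfrac{1}{2}.
\]
Such a choice is possible precisely because of the standing assumption \eqref{ineqbg}: sum the three lower bounds $\sigma>\tfrac{d}{2p}$ and $\delta>\tfrac{\gamma}{2}+\tfrac{d}{2p}$ with $\beta+\tfrac{1}{q}$ and note that the total can be made strictly smaller than $\tfrac{1}{2}$ by taking $\sigma$ and $\delta$ sufficiently close to their respective lower bounds.

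With these parameters, Lemma \ref{lem1} gives $A_p^{-\sigma}G(\cdot)\in \M^q(0,T;\gamma(\bH,L^p(\mathcal{O})))$, so condition \eqref{sigma-eta} holds for $A=A_p$, $\bE=L^p(\mathcal{O})$. Since $L^p(\mathcal{O})$ is M-type $2$ for $p\ge 2$ and $A_p\in\BIP^-(\pi/2,L^p(\mathcal{O}))$ (using Remark \ref{nu0} to absorb the shift $\bar\nu$), Theorem \ref{Th:2.1} yields the existence of the mild solution $u(\cdot)\in\M^q(0,T;L^p(\mathcal{O}))$ defined by \eqref{2.19}, and Theorem \ref{Co:2} (which applies precisely because $\beta+\delta+\sigma+\tfrac{1}{q}<\tfrac{1}{2}$) produces a modification with trajectories almost surely in $\mathcal{C}^\beta([0,T];D(A_p^\delta))$.

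It remains to promote the $D(A_p^\delta)$-valued regularity to a $\mathcal{C}^\gamma(\bar{\mathcal{O}})$-valued one. As in the proof of Lemma \ref{lem1}, Theorem 1.15.3 of \cite{triebel} gives the continuous embedding $D(A_p^\delta)\embed H^{2\delta,p}(\mathcal{O})$. By the Sobolev embedding theorem for Bessel-potential spaces, since $2\delta - \tfrac{d}{p}>\gamma$ by construction, one has $H^{2\delta,p}(\mathcal{O})\embed \mathcal{C}^\gamma(\bar{\mathcal{O}})$. Composing these embeddings yields $\mathcal{C}^\beta([0,T];D(A_p^\delta))\embed \mathcal{C}^\beta([0,T];\mathcal{C}^\gamma(\bar{\mathcal{O}}))$, so the modification obtained from Theorem \ref{Co:2} has trajectories almost surely in $\mathcal{C}^\beta([0,T];\mathcal{C}^\gamma(\bar{\mathcal{O}}))$, as claimed.

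The only delicate point is the verification that the strict inequality \eqref{ineqbg} leaves enough slack to choose both $\sigma$ and $\delta$ strictly above their respective lower thresholds while still maintaining the strict inequality required by condition \eqref{cond:2} of Theorem \ref{Co:2}; everything else is a direct combination of the two previously cited results with the Sobolev embedding into $\mathcal{C}^\gamma(\bar{\mathcal{O}})$. In particular, no Hilbert-space structure on $\bH$ is used beyond what is already encoded in $\gamma(\bH,L^p(\mathcal{O}))$ through Lemma \ref{gammalp}, which is the reason the final condition \eqref{ineqbg} is independent of $\bH$.
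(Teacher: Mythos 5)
Your proposal is correct and follows essentially the same route as the paper's proof: choose $\sigma$ just above $\tfrac{d}{2p}$ and $\delta$ just above $\tfrac{\gamma}{2}+\tfrac{d}{2p}$ so that \eqref{ineqbg} guarantees $\beta+\delta+\sigma+\tfrac{1}{q}<\tfrac{1}{2}$, then apply Lemma \ref{lem1}, Theorems \ref{Th:2.1} and \ref{Co:2}, and conclude via $D(A_p^\delta)\subseteq H^{2\delta,p}(\mathcal{O})\hookrightarrow\mathcal{C}^\gamma(\bar{\mathcal{O}})$. The only cosmetic difference is that the paper picks $\sigma$ first and then $\delta$ in the residual interval, whereas you impose the two lower bounds simultaneously and check the budget; the content is identical.
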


\begin{proof}
From (\ref{ineqbg}), we can find $\sigma$ such that
\[
\frac{d}{2p}<\sigma<\frac{1}{2}-\frac{1}{q}-\frac{d}{2p}-\frac{\gamma}{2}-\beta.
\]
In particular, we have $\sigma\in\bigl(\frac{d}{2p},\frac{1}{2}\bigr).$ Then, by Theorem \ref{Th:2.1} and Lemma \ref{lem1} the mild solution $u(\cdot)$ of equation (\ref{scp}) exists and is given by the stochastic convolution (\ref{2.19}). We now choose $\delta$ satisfying
\begin{equation}\label{ineqgdbs}
\frac{d}{2p}+\frac{\gamma}{2}<\delta<\frac{1}{2}-\frac{1}{q}-\beta-\sigma.
\end{equation}
The second inequality in (\ref{ineqgdbs}) and Theorem \ref{Co:2} imply that $u(\cdot)$ has trajectories almost surely in $\mathcal{C}^\beta([0,T];D(A_p^\delta)).$ The first inequality in (\ref{ineqgdbs}), Theorem 1.15.3 in \cite{triebel} and the Sobolev embedding theorem yield
\[
D(A_p^\delta)=[L^p(\mathcal{O}),D(A_p)]_\delta\subseteq H^{2\delta,p}(\mathcal{O})\hookrightarrow\mathcal{C}^\gamma(\bar{\mathcal{O}})
\]
and the desired result follows.
\end{proof}

\begin{rem}
Using results by \cite{brz1} (see e.g. Section 3.2 in \cite{brzgat}) one can prove that the same assertion in Proposition \ref{prop2} holds for $\bH=H^{\theta,2}(\mathcal{O})$ with $\theta>\frac{d}{2}+\frac{2}{q}-1,$ condition $(\ref{sigma-eta-2})$ replaced with $G(\cdot)\in\M^q(0,T;\Lin(\bH)),$ $\beta$ and $\gamma$ satisfying
\[
\beta+\frac{\gamma}{2}+\frac{1}{q}+\frac{d}{4}<\frac{1}{2}(1+\theta)
\]
and $p$ sufficiently large. In contrast, our choice of $\beta$ and $\gamma$ in Proposition \ref{prop2} depends on $d,p$ and $q$ but not on the separable Hilbert space $\bH.$
\end{rem}

\begin{ex}
Let $m>2d$ and  $g:\Omega\times[0,T]\times \mathcal{O}\to\R$ be jointly measurable and bounded with respect to $\xi\in\mathcal{O}$ such that $g(\omega,t,\cdot)\in L^m(\mathcal{O})$ for each $(t,\omega)\in[0,T]\times\Omega,$ and the map
\[
[0,T]\times\Omega\ni(t,\omega)\mapsto g(\omega,t,\cdot)\in L^m(\mathcal{O})
\]
is an $\mathds{F}$-progressively measurable process and belongs to $\mathcal{M}^q(0,T;L^m(\mathcal{O})),$ with $q$ sufficiently large so that
\[
\frac{d}{m}+\frac{1}{q}<\frac{1}{2}.
\]
Let $\theta\in\bigl(\frac{d}{m}+\frac{d-1}{2}+\frac{1}{q},\frac{d}{2}\bigr)$ also be fixed, and let $w(\cdot)$ be a cylindrical Wiener process with Cameron-Martin space $\bH=H^{\theta,2}(\mathcal{O}).$ We consider the following linear stochastic PDE on $[0,T]\times\mathcal{O}$ with zero Dirichlet-type boundary conditions and perturbed by ``colored" additive noise,
\begin{align}
\frac{\partial u}{\partial t}(t,\xi)+(\mathcal{A}u(t,\cdot))(\xi)&=g(t,\xi)\,\frac{\partial w}{\partial t}(t,\xi), \ \ \mbox{ on} \ [0,T]\times\mathcal{O}\notag\\
u(t,\xi)&=0, \ \ \ \ \ \ \ \ \ \ \ \ \ \ t\in (0,T], \ \xi\in\partial \mathcal{O}\label{spde1}\\
u(0,\cdot)&=0, \ \ \ \ \ \ \  \ \ \ \ \ \ \ \xi\in\mathcal{O}.\notag
\end{align}
\end{ex}

\begin{theorem}
Suppose $\beta$ and $\gamma$ satisfy
\begin{equation}\label{ineqtheta}
\beta+\frac{\gamma}{2}<\theta+\frac{1}{2}-d\left(\frac{1}{2}+\frac{1}{m}\right)-\frac{1}{q}.
\end{equation}
Then equation $(\ref{spde1})$ has a mild solution with trajectories almost surely in\linebreak $\mathcal{C}^\beta([0,T];\mathcal{C}^\gamma(\bar{\mathcal{O}})).$
\end{theorem}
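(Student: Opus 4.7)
The plan is to reduce equation (\ref{spde1}) to an instance of the abstract stochastic Cauchy problem (\ref{scp}) with a carefully chosen value of $p$, and then invoke Proposition \ref{prop2}. Concretely, I interpret the noise coefficient as the multiplication operator
\[
G(t)y := g(t,\cdot)\,y(\cdot), \qquad y\in\bH=H^{\theta,2}(\mathcal{O}),
\]
so that (\ref{spde1}) matches (\ref{scp}) in the space $L^p(\mathcal{O})$.

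The first main step is to show that $G(t)\in\Lin(\bH,L^p(\mathcal{O}))$ with a tractable norm bound. Since $\theta<d/2$, the critical Sobolev embedding gives $H^{\theta,2}(\mathcal{O})\hookrightarrow L^s(\mathcal{O})$ with $s=2d/(d-2\theta)$. Applying H\"{o}lder's inequality to $g(t,\cdot)\,y(\cdot)$ with the exponent identity $1/p=1/m+1/s$ forces the choice
\[
\frac{1}{p}=\frac{1}{m}+\frac{1}{2}-\frac{\theta}{d},
\]
and yields $\norm{G(t)}_{\Lin(\bH,L^p(\mathcal{O}))}\le C\abs{g(t,\cdot)}_{L^m(\mathcal{O})}$. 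Combined with the assumption $g(\cdot)\in\mathcal{M}^q(0,T;L^m(\mathcal{O}))$, this gives $G(\cdot)\in\mathcal{M}^q(0,T;\Lin(\bH,L^p(\mathcal{O})))$, which is the integrability hypothesis (\ref{sigma-eta-2}) of Proposition \ref{prop2}.

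Next, I would check that this $p$ belongs to the admissible window $\bigl(\max\{2,d\},m\bigr]$ required by Lemma \ref{lem1}. The inequality $p<m$ is automatic from $1/p>1/m$; the inequality $p>d$ is equivalent to $\theta>d/m+(d-2)/2$, which is implied by the standing hypothesis $\theta>d/m+(d-1)/2+1/q$; and $p>2$ is even weaker. Substituting the resulting identity $d/p=d/m+d/2-\theta$ into condition (\ref{ineqbg}) gives
\[
\beta+\frac{\gamma}{2}+\frac{1}{q}+\frac{d}{m}+\frac{d}{2}-\theta<\frac{1}{2},
\]
which is precisely the hypothesized inequality (\ref{ineqtheta}) after rearrangement. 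Proposition \ref{prop2} then produces the mild solution and the required H\"{o}lder trajectories in $\mathcal{C}^\beta([0,T];\mathcal{C}^\gamma(\bar{\mathcal{O}}))$.

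The only delicate point is pinning down this particular value of $p$; indeed, the lower bound $\theta>d/m+(d-1)/2+1/q$ in the hypotheses is chosen precisely to guarantee that the right-hand side of (\ref{ineqtheta}) is strictly positive, so that the admissible region for $(\beta,\gamma)$ is non-empty. Apart from this bookkeeping of H\"{o}lder exponents and standard Sobolev embedding constants, no new idea beyond Proposition \ref{prop2} is required.
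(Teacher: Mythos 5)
Your proposal is correct and follows essentially the same route as the paper: the same choice $\tfrac{1}{p}=\tfrac{1}{2}-\tfrac{\theta}{d}+\tfrac{1}{m}$, the same Sobolev embedding of $H^{\theta,2}(\mathcal{O})$ combined with H\"{o}lder's inequality to realize $g$ as a Nemytskii multiplier in $\Lin(\bH,L^p(\mathcal{O}))$, and the same reduction of (\ref{ineqtheta}) to (\ref{ineqbg}) so that Proposition \ref{prop2} applies. Your explicit verification that this $p$ lies in the window $(\max\{2,d\},m]$ required by Lemma \ref{lem1} is a detail the paper leaves implicit, and it checks out.
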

\begin{proof}
We formulate equation $(\ref{spde1})$ as an evolution equation in $L^p(\mathcal{O})$ with $
\frac{1}{p}:=\frac{1}{2}-\frac{\theta}{d}+\frac{1}{m}.$ By the Sobolev embedding theorem, we have $\bH=H^{\theta,2}(\mathcal{O})\hookrightarrow L^r(\mathcal{O})$ continuously for $\frac{1}{r}:=\frac{1}{p}-\frac{1}{m}=\frac{1}{2}-\frac{\theta}{d}.$ Let $i_{\theta,r}$ denote this embedding. For each $(t,\omega)\in [0,T]\times\Omega,$ we define the Nemytskii multiplication operator $G(t,\omega)$ as
\[
\left(G(t,\omega)y\right)(\xi):=g(\omega,t,\xi)i_{\theta,r}(y)(\xi), \ \ \xi\in \mathcal{O}, \ \ y\in \bH.
\]
By the assumptions on $g$ and H\"{o}lder's inequality, it follows that $G(\cdot)$ is a well defined $\Lin(\bH,L^p(\mathcal{O}))$-valued process and belongs to $\mathcal{M}^q(0,T;\Lin(\bH,L^p(\mathcal{O}))).$ From condition (\ref{ineqtheta}), our choice of $p$ satisfies (\ref{ineqbg}). The desired result follows from Proposition \ref{prop2}.
\end{proof}

\begin{ex}[Fractional powers of elliptic operators]
Proposition \ref{prop2} can be easily generalized to incorporate stochastic Cauchy problems in $L^p(\mathcal{O})$ of the form \begin{equation}\label{scp2}
\begin{split}
du(t)+A_p^{\alpha/2}u(t)\,dt&=G(t)\,dW(t), \ t\in[0,T],\\
u(0)&=0.
\end{split}
\end{equation}
with $\alpha\in(0,2].$ Indeed, notice that $A_p^{\alpha/2}\in\BIP^-(\pi/2,\bE)$ for $\alpha\in (0,2].$ Let $G(\cdot)$ be as in Lemma \ref{lem1}, and suppose $p,q,\beta$ and $\gamma$ satisfy
\begin{equation}\label{ineqbg2}
\beta+\frac{1}{q}+\frac{1}{\alpha}\left(\gamma+\frac{2d}{p}\right)<\frac{1}{2}.
\end{equation}
Choose $\sigma$ such that
\[
\frac{d}{\alpha p}<\sigma<\frac{1}{2}-\frac{1}{\alpha}\left(\frac{d}{p}+\gamma\right)-\frac{1}{q}-\beta.
\]
In particular, we have $\frac{\alpha\sigma}{2}\in\bigl(\frac{d}{2p},\frac{1}{2}\bigr).$ Then, by Theorem \ref{Th:2.1} and Lemma \ref{lem1}, the mild solution $u(\cdot)$ of equation (\ref{scp}) exists. 
We now choose $\delta$ satisfying
\begin{equation}\label{ineqgdbs2}
\frac{1}{\alpha}\left(\frac{d}{p}+\gamma\right)<\delta<\frac{1}{2}-\frac{1}{q}-\beta-\sigma.
\end{equation}
The second inequality in (\ref{ineqgdbs2}) and Theorem \ref{Co:2} imply that $u(\cdot)$ has trajectories almost surely in $\mathcal{C}^\beta([0,T];D(A_p^{\alpha\delta/2})).$ The first inequality in (\ref{ineqgdbs2}) and the Sobolev embedding theorem imply that $u(\cdot)$ has trajectories almost surely in $\mathcal{C}^\beta([0,T];\mathcal{C}^\gamma(\bar{\mathcal{O}})),$ and the same conclusion of Proposition \ref{prop2} follows.
\end{ex}

\section*{Acknowledgement}
The author thanks the anonymous referee for pointing out that the main result can be easily generalized to the case of fractional powers of elliptic operators.

\bibliographystyle{imsart-nameyear}



\end{document}